\numberwithin{equation}{section}
\newtheorem{theo}{Theorem}[section]
\newtheorem{prop}[theo]{Proposition}
\newtheorem{lemm}[theo]{Lemma}
\newtheorem{coro}[theo]{Corollary}
\theoremstyle{definition}
\theoremstyle{remark}
\begin{document}

\baselineskip=10pt

\title{Two estimates concerning classical diophantine approximation constants}

\author{ Johannes Schleischitz}
                            
\maketitle

\vspace{8mm}

\begin{abstract}
In this paper we aim to prove two inequalities involving the classical approximation constants $w_{n}^{\prime}(\zeta),\widehat{w}_{n}^{\prime}(\zeta)$
that stem from the simultaneous approximation problem $\vert \zeta^{j}x-y_{j}\vert$, $1\leq j\leq n$, on the one side and the constants 
$w_{n}^{\ast}(\zeta),\widehat{w}_{n}^{\ast}(\zeta)$ connected to approximation with algebraic numbers of degree $\leq n$ on the other side. We concretely prove 
$w_{n}^{\ast}(\zeta)\widehat{w}_{n}^{\prime}(\zeta)\geq 1$ and $\widehat{w}_{n}^{\ast}(\zeta)w_{n}^{\prime}(\zeta)\geq 1$.
The first result is due to W. Schmidt, however our method of proving it allows to derive the other inequality as a dual result.\\
Finally we will discuss estimates of $w_{n}^{\ast}(\zeta), \widehat{w}_{n}^{\ast}(\zeta)$ uniformly in $\zeta$ depending only on $n$ as an application.

\end{abstract}
{\footnotesize{ AMS 2010 Mathematics Subject  Classification: 11J13, 11H06}}\\
 \vspace{4mm}
 \indent {\footnotesize{ Supported by FWF grant P22794-N13}}

\vspace{8mm}

\section{Introduction}

\maketitle

\subsection{Approximation constants $w_{n}^{\prime}(\zeta),\widehat{w}_{n}^{\prime}(\zeta)$ and $w_{n}^{\ast}(\zeta),\widehat{w}_{n}^{\ast}(\zeta)$} 

For a fixed positive integer $n$ and a vector $\boldsymbol{\zeta}=(\zeta_{1},\zeta_{2},\ldots ,\zeta_{n})\in{\mathbb{R}^{n}}$ define the approximation constants
$w_{n,j}^{\prime}(\zeta), 1\leq j\leq n+1$, as the supremum of all real numbers $\nu$, such that the system 

\begin{equation}  \label{eq:1}
 \vert x\vert \leq X, \quad \vert \zeta_{i}x- y_{i}\vert \leq X^{-\nu}, \qquad 1\leq i \leq n,
\end{equation}

\noindent has $j$ linearly independent solution $(x,y_{1},\ldots ,y_{n})$ for certain arbitrary large values of $X$. Similarly, define
 $\widehat{w}_{n,j}^{\prime}(\zeta), 1\leq j\leq n+1$ as the supremum of all $\nu$, such that system (\ref{eq:1}) has $j$ linearly
independent solutions for all sufficiently large $X$. Clearly $\widehat{w}_{n,j}^{\prime}(\zeta)\leq w_{n,j}^{\prime}(\zeta)$ for all $1\leq j\leq n+1$.
In the special case $\boldsymbol{\zeta}=(\zeta,\zeta^{2},\ldots ,\zeta^{n})$ for some real number $\zeta$, which will be in the focus of our study,
let $w_{n,j}^{\prime}(\zeta):=w_{n,j}^{\prime}(\boldsymbol{\zeta})$ and similarly $\widehat{w}_{n,j}^{\prime}(\zeta):=\widehat{w}_{n,j}(\boldsymbol{\zeta})$.
For convenient writing we further put $w_{n}^{\prime}(\boldsymbol{\zeta}):= w_{n,1}^{\prime}(\boldsymbol{\zeta}), 
\widehat{w}_{n}^{\prime}(\boldsymbol{\zeta}):=\widehat{w}_{n,1}^{\prime}(\boldsymbol{\zeta})$. In particular $w_{n}^{\prime}(\zeta)$ (resp. $\widehat{w}_{n}^{\prime}(\zeta)$) 
is the supremum of all $\nu$ such that (\ref{eq:1}) with $\zeta_{i}=\zeta^{i}$ has infinitely many solutions respectively a solution for all sufficiently large $X$.     \\

\noindent The approximation constants $w_{n}^{\ast}(\zeta),\widehat{w}_{n}^{\ast}(\zeta)$ quantify how good a 
real number can be approximated by algebraic numbers of degree at most $n$.
For a polynomial $P\in{\mathbb{Z}[T]}$ define $H(P)$ as the largest absolute value of its coefficients and for an algebraic number $\alpha$ define
 $H(\alpha):=H(P)$ for the minimal polynomial $P$ of $\alpha$ with relatively prime integral coefficients. The constants $w_{n}^{\ast}(\zeta)$ 
(resp. $\widehat{w}_{n}^{\ast}(\zeta)$) are given by the supremum of all real $\nu$ such that 

\begin{equation} \label{eq:2}
 \vert \zeta-\alpha\vert \leq H(\alpha)^{-\nu-1}
\end{equation}

\noindent has infintely many solutions, respectively a solution for arbitrarily large values of $H(\alpha)$, with $\alpha$ algebraic of degree $\leq n$. 
Clearly $w_{n}^{\ast}(\zeta), \widehat{w}_{n}^{\ast}(\zeta)$ are monotonically increasing as $n$ increases. For $w_{n}^{\ast}(\zeta)$ it is known that we have 

\begin{equation}  \label{eq:anelim}
\frac{n+1}{2}\leq w_{n}^{\ast}(\zeta)
\end{equation}

\noindent for all $\zeta\in{\mathbb{R}}$ not algebraic of degree $\leq n$ as a consequence of (\ref{eq:hundk}) and (\ref{eq:mily})                                                                                                 
we will establish later. \\
It is however conjectured that even the stronger  
lower bound $w_{n}^{\ast}(\zeta)\geq n$ holds for any $\zeta$ not algebraic of degree $\leq n$.      
It is well known that $n$ is the optimal possible
uniform (in $\zeta$) lower bound in (\ref{eq:anelim}), as for a generic $\zeta$ we have $w_{n}^{\ast}(\zeta)=\widehat{w}_{n}^{\ast}(\zeta)=n$,
 see Theorem 2.3 in \cite{1} for example.                                                                                  

\subsection{Related approximation problems and constants}

We will only treat the case $\boldsymbol{\zeta}=(\zeta,\zeta^{2},\ldots ,\zeta^{n})$ in the sequel which is sufficient for our concern,
although most of the following results in the introduction hold for any vector $\boldsymbol{\zeta}\in{\mathbb{R}^{n}}$.\\
In order to study the values $w_{n}^{\prime}(\zeta),\widehat{w}_{n}^{\prime}(\zeta)$ we first introduce a closely connected Diophantine 
approximation problem. Consider the system 

\begin{eqnarray}
 \vert x\vert &\leq& Q^{1+\theta} \label{eq:3}  \\
 \vert \zeta x-y_{1}\vert &\leq& Q^{-\frac{1}{n}+\theta}  \nonumber  \\
 \vert \zeta^{2}x-y_{2}\vert &\leq& Q^{-\frac{1}{n}+\theta}   \nonumber    \\
 \vdots \quad &\vdots& \quad \vdots   \nonumber \\
 \vert \zeta^{n}x-y_{n}\vert &\leq& Q^{-\frac{1}{n}+\theta},   \nonumber
\end{eqnarray}

\noindent parametrised by $Q>1$. Let $\psi_{n,j}(Q)$ be the infimum of all $\theta$ such that (\ref{eq:3}) has $j$ linearly independent solutions.
The functions $\psi_{n,j}(Q)$ can alternatively be interpreted via successive minima. Consider the lattice
$\Lambda=\{(x,\zeta x-y_{1},\ldots ,\zeta^{n}x-y_{n}):x,y_{1},\ldots y_{n}\in{\mathbb{Z}} \}$
and the convex body (in fact the parallelepiped) $K(Q)$ defined as the set of points 
$(z_{1},z_{2},\ldots ,z_{n+1})\in{\mathbb{R}^{n+1}}$ with

\begin{eqnarray}
  \vert z_{1}\vert &\leq& Q    \label{eq:tommot}  \\
  \vert z_{i}\vert &\leq& Q^{-\frac{1}{n}},\qquad 2\leq i\leq n+1.   \label{eq:tomot}
\end{eqnarray}

\noindent Now define $\lambda_{n,j}(Q)$ to be the $j$-th {\em successive minimum} of $K(Q)$ with respect to $\Lambda$, which by definiton
is the smallest value $\lambda$ such that $\lambda \cdot K(Q)$ contains (at least) $j$ linearly independent lattice points.
Then we have

\[
 Q^{\psi_{n,j}(Q)}=\lambda_{n,j}(Q).
\]

\noindent Put $\underline{\psi}_{n,j}:=\liminf_{Q\to\infty} \psi_{n,j}(Q), \overline{\psi}_{n,j}:= \limsup_{Q\to\infty} \psi_{n,j}(Q)$. 
We have $-1\leq \psi_{n,j}(Q)\leq \frac{1}{n}$ for all $Q>1$ and $1\leq j\leq n+1$ (which is implicitely derived rigurously from section
4 in \cite{6} considering the functions $L_{j}$ arising from $\psi_{n,j}$) and consequently                                                                          

\begin{eqnarray*}
 -1&\leq& \underline{\psi}_{n,1}\leq \underline{\psi}_{n,2}\leq \cdots \leq \underline{\psi}_{n,n+1}\leq \frac{1}{n}  \\
 -1&\leq& \overline{\psi}_{n,1}\leq \overline{\psi}_{n,2}\leq \cdots \leq \overline{\psi}_{n,n+1}\leq \frac{1}{n}.
\end{eqnarray*}

\noindent A crucial observation for the study of the functions $\psi_{n,j}(Q)$ (a special case of Theorem 1.1 in \cite{6})                                             
is that for $\zeta$ not algebraic of degree $\leq n$ and every $1\leq j\leq n$ there are infinitely many $Q$ 
with $\psi_{n,j}(Q)=\psi_{n,j+1}(Q)$. Thus in particular

\begin{equation}  \label{eq:stoiko}
 \underline{\psi}_{n,j+1}\leq \overline{\psi}_{n,j}, \qquad 1\leq j\leq n.
\end{equation}

\noindent An easy generalisation of Theorem 1.4 in \cite{6}                                                                                          
states that we have

\begin{equation} \label{eq:4}
 \left(1+\omega_{n,j}^{\prime}(\zeta)\right)\left(1+\underline{\psi}_{n,j}\right)= \left(1+\widehat{\omega}_{n,j}^{\prime}(\zeta)\right)\left(1+\overline{\psi}_{n,j}\right)= \frac{n+1}{n},\qquad 1\leq j\leq n+1,
\end{equation}

\noindent which enables us to easily compute the value $\underline{\psi}_{n,j}$ from $w_{n,j}^{\prime}(\zeta)$ such as $\overline{\psi}_{j}$ from 
$\widehat{w}_{n,j}^{\prime}(\zeta)$ and vice versa. \\

\noindent The approximation constants $w_{n}^{\ast}(\zeta), \widehat{w}_{n}^{\ast}(\zeta)$ are in close connection to the dual problem
of approximation of a linear form, which in our case is just $x+\zeta y_{1}+\ldots +\zeta^{n}y_{n}$. \\
First, define the approximation constants $w_{n,j}(\zeta)$ (resp. $\widehat{w}_{n,j}(\zeta)$) as the supremum of all $\nu\in{\mathbb{R}}$, such that the system 

\[
 \vert y_{i}\vert \leq X, \qquad \vert x+\zeta y_{1}+\ldots +\zeta^{n}y_{n} \vert \leq X^{-\nu}
\]

\noindent has $j$ linearly independent solutions $(x,y_{1},\ldots ,y_{n})$ for certain arbitrary large values of $X$ (respectively all sufficiently
large $X$) and put $w_{n}(\zeta):=w_{n,1}(\zeta),\widehat{w}_{n}(\zeta):=\widehat{w}_{n,1}(\zeta)$. In virtue of Dirichlet's Theorem we have

\begin{equation}  \label{eq:hundk}
 w_{n}(\zeta)\geq \widehat{w}_{n}(\zeta)\geq n.
\end{equation}
 
\noindent Given the dual lattice $\Lambda^{\ast}$ and the convex bodies $K^{\ast}(Q)$ dual to $K(Q)$, namely

\begin{eqnarray}
\Lambda^{\ast}&:=&\left\{(x+\zeta_{1}y_{1}+\zeta_{2}y_{2}+\cdots +\zeta_{n}y_{n},y_{1},y_{2},\ldots ,y_{n}):x,y_{1},\ldots ,y_{n}\in{\mathbb{Z}}\right\} \label{eq:lamd}   \\
K^{\ast}(Q)&:=&\left\{\boldsymbol{x}=(x,y_{1},\ldots ,y_{n}): \quad <\boldsymbol{x},\boldsymbol{z}>\leq 1 \quad \forall \boldsymbol{z}\in{K(Q)}\right\}  \label{eq:nonumbers}
\end{eqnarray}

\noindent we can define the functions $\lambda_{n,j}^{\ast}(Q)$ as the successive minima of $K^{\ast}(Q)$ with respect to $\Lambda^{\ast}$. 
Put

\[
 Q^{\psi_{n,j}^{\ast}(Q)}= \lambda_{n,j}^{\ast}(Q).
\]

\noindent Furthermore we denote by $\eta_{n,j}^{\ast}(Q)$ the successive minima of $K(Q)$ (instead of $K^{\ast}(Q)$) with respect to $\Lambda^{\ast}$, which
correspond to the successive minima of the convex body $K^{+}(Q)$ with respect to the lattice $\Lambda^{+}$ given by

\begin{eqnarray}
 \Lambda^{+}&:=& \mathbb{Z}^{n+1}     \label{eq:ask}         \\    
 K^{+} (Q) &:=& \left\{\boldsymbol{x}\in{\mathbb{R}^{n+1}}: \vert y_{t}\vert \leq Q^{\frac{1}{n}}\quad 1\leq t\leq n, \vert x+\zeta y_{1}+\cdots +\zeta^{n}y_{n}\vert\leq Q^{-1} \right\}    \label{eq:askk}
\end{eqnarray}

\noindent and define functions $\nu_{n,j}^{\ast}(Q)$ by $Q^{\nu_{n,j}^{\ast}(Q)}=\eta_{n,j}^{\ast}(Q)$.
If we put $\underline{\psi}_{n,j}^{\ast}:=\liminf_{Q\to\infty} \psi_{n,j}^{\ast}(Q), \overline{\psi}_{n,j}^{\ast}:=\limsup_{Q\to\infty} \psi_{n,j}^{\ast}(Q)$ 
and similarly define $\underline{\nu}_{n,j}^{\ast},\overline{\nu}_{n,j}^{\ast}$ the obvious inequalities $K^{\ast}(Q)\subset K(Q)\subset (n+1)K^{\ast}(Q)$ yield

\begin{equation}  \label{eq:mueli}
 \lim_{Q\to\infty} \psi_{n,j}^{\ast}(Q)-\nu_{n,j}^{\ast}(Q)=0 
\end{equation}

\noindent and thus

\begin{equation}   \label{eq:ka}
 \underline{\psi}_{n,j}^{\ast}= \underline{\nu}_{n,j}^{\ast}, \qquad \overline{\psi}_{n,j}^{\ast}= \overline{\nu}_{n,j}^{\ast}.
\end{equation}

\noindent Combined with Mahler's inequality $\lambda_{n,j}\lambda_{n,n+2-j}^{\ast}\asymp 1$ for $1\leq j\leq n+1$ we have

\begin{equation}  \label{eq:ka2}
 \underline{\psi}_{n,j}= -\overline{\psi}_{n,n+2-j}^{\ast}=-\overline{\nu}_{n,n+2-j}^{\ast}, \qquad \overline{\psi}_{n,j}= -\underline{\psi}_{n,n+2-j}^{\ast}=-\underline{\nu}_{n,n+2-j}^{\ast}.
\end{equation}

\noindent It will be more convenient to work with the functions $\nu_{n,j}^{\ast}$ in the sequel.
Again by generalizing Theorem 1.4 in \cite{6}                                                                                                               
we have 

\begin{equation}  \label{eq:ka3}
 \left(w_{n,j}(\zeta)+1\right)\left(\frac{1}{n}+\underline{\psi}_{n,j}^{\ast}\right)= \left(\widehat{w}_{n,j}(\zeta)+1\right)\left(\frac{1}{n}+\overline{\psi}_{n,j}^{\ast}\right)= \frac{n+1}{n}
\end{equation}

\noindent for $1\leq j\leq n+1$, and obviously by (\ref{eq:ka}) we can write $\underline{\nu}_{j}^{\ast}$ resp. $\overline{\nu}_{j}^{\ast}$ 
instead of $\underline{\psi}_{j}^{\ast}$ resp. $\overline{\psi}_{j}^{\ast}$. \\

\noindent A main ingredient in the proof of the Theorem \ref{thm2} will be Minkowski's Theorem, which asserts that
for a convex body $K\in{\mathbb{R}^{n+1}}$, a lattice $\Lambda$ and corresponding successive minima $\lambda_{n,j}$ we have

\begin{equation}  \label{eq:mink}
  \frac{2^{n+1}}{(n+1)!}\frac{\det(\Lambda)}{\rm{vol}(K)}\leq \lambda_{n,1}\lambda_{n,2}\cdots \lambda_{n,n+1}\leq 2^{n+1}\frac{\det(\Lambda)}{\rm{vol}(K)}.       
\end{equation}

\noindent For a proof see Theorem 1 page 60 and Theorem 2 page 62 in \cite{3}.\\                                                                                     
In our special case the height of the parallelepipeds $K^{+}(Q)$ in direction of the $x$-axis in every point $(x,y_{1},\ldots,y_{n})$ 
with $(y_{1},y_{2},\ldots ,y_{n})\in{\left[-Q^{\frac{1}{n}},Q^{\frac{1}{n}}\right]^{n}}$ is $2Q^{-1}$, so we have $\rm{vol}(K^{+}(Q))=\left(2Q^{\frac{1}{n}}\right)^{n}\cdot 2Q^{-1}=2^{n+1}$
 for all $Q>1$, and furthermore $\det(\Lambda^{+})=\det(\mathbb{Z}^{n+1})=1$. Since $\eta_{n,j}^{\ast}$ are the successive minima of $K^{+}(Q)$
with respect to $\Lambda^{+}$, (\ref{eq:mink}) leads to 

\[
 \frac{1}{(n+1)!}\leq \eta_{n,1}^{\ast}(Q)\eta_{n,2}^{\ast}(Q)\cdots \eta_{n,n+1}^{\ast}(Q)\leq 1.
\]

\noindent Hence by taking logarithms, there is a constant $C$ depending only on $n$ such that

\begin{equation} \label{eq:zokz}
 \left\vert \sum_{i=1}^{n+1} \nu_{n,i}^{\ast}(Q)\right\vert \leq \frac{C(n)}{\log(Q)}.
\end{equation}

\noindent The constants $w_{n}^{\ast}(\zeta),\widehat{w}_{n}^{\ast}(\zeta)$ are closely linked to the constants $w_{n,j}(\zeta),\widehat{w}_{n,j}(\zeta)$, 
as already indicated above.
For instance we have $\frac{w_{n}(\zeta)+1}{2}\leq w_{n}^{\ast}(\zeta)\leq w_{n}(\zeta)$, which by (\ref{eq:hundk}) implies the bound                            
(\ref{eq:anelim}) for all $\zeta$ mentioned in subsection 1.1. See also remark 3 to Corollary \ref{korol} in section 2.2.                                                                           
 In fact, we will prove

\begin{theo}  \label{thm1}
 For any integer $n\geq 1$ and any $\zeta\in{\mathbb{R}}$ not algebraic of degree $\leq n$ we have

\begin{eqnarray}
 w_{n}^{\ast}(\zeta)&\geq& w_{n,n+1}(\zeta)                         \label{eq:gans}        \\    
 \widehat{w}_{n}^{\ast}(\zeta)&\geq& \widehat{w}_{n,n+1}(\zeta).   \label{eq:joerh}
\end{eqnarray}

\end{theo}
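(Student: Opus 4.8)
The plan is to translate both inequalities into a statement about integer polynomials and then, starting from an \emph{entire basis} of integer polynomials that are small at $\zeta$, to manufacture an algebraic number of degree at most $n$ lying very close to $\zeta$. First I would fix a real $\nu<w_{n,n+1}(\zeta)$ and unwind the definition of $w_{n,n+1}(\zeta)$: for arbitrarily large $X$ the system defining the $w_{n,j}(\zeta)$ has $n+1$ linearly independent integer solutions $(x,y_1,\dots,y_n)$, and since $\vert x\vert\ll X$ is then automatic, these encode polynomials $P_1,\dots,P_{n+1}\in\mathbb Z[T]$ of degree $\le n$ with $H(P_k)\ll X$ and $\vert P_k(\zeta)\vert\le X^{-\nu}$. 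Being linearly independent, the $P_k$ form an $\mathbb R$-basis of the space of polynomials of degree $\le n$; re-expanding each $P_k$ in the shifted basis $1,(T-\zeta),\dots,(T-\zeta)^n$, the $(n+1)\times(n+1)$ matrix $M=\bigl(P_k^{(i)}(\zeta)/i!\bigr)_{0\le i\le n,\,1\le k\le n+1}$ differs from the integer coefficient matrix only by a change of basis of determinant $\pm1$, so $\vert\det M\vert\ge1$, while the entries of its $i=0$ column are the $P_k(\zeta)$, of size $\le X^{-\nu}$, and all other entries are $\ll_{n,\zeta}X$.

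Expanding $\det M$ along this small column and using $\vert\det M\vert\ge1$ forces one of the $n\times n$ minors drawn from the columns $1,\dots,n$ to have size $\gg X^{\nu}$; unwinding this produces, among the $P_k$ and their integer combinations of bounded height, a polynomial $R$ of degree $\le n$ with $H(R)\ll X$ that is still about $X^{-\nu}$ at $\zeta$ but has some Taylor coefficient $R^{(i)}(\zeta)/i!$, $1\le i\le n$, of size $\gg X^{\nu-n+1}$. By the standard bound for the distance from $\zeta$ to the nearest root, $R$ then has a root $\alpha$ with $\vert\zeta-\alpha\vert$ controlled, and with $H=H(\alpha)\ll X$ one obtains $\vert\zeta-\alpha\vert\le H^{-\mu-1+o(1)}$ for a $\mu$ computable from $\nu$, $i$ and that root estimate; letting $\nu\uparrow w_{n,n+1}(\zeta)$ yields a lower bound for $w_n^{\ast}(\zeta)$.

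The step I expect to be the main obstacle is that this lower bound is not yet $w_{n,n+1}(\zeta)$ itself: the bottleneck is polynomials whose derivative (or relevant Taylor coefficient) at $\zeta$ is anomalously small compared with their height, in which case the minor estimate only guarantees $\vert R'(\zeta)\vert\gg X^{\nu-n+1}$, so a root of $R$ need only lie within $X^{2\nu-n}$ of $\zeta$ — weaker than what is wanted as soon as $w_{n,n+1}(\zeta)<n$. Overcoming this requires exploiting the geometry of the lattice $\Lambda^{+}=\mathbb Z^{n+1}$ from \eqref{eq:ask} far more carefully: using Minkowski's theorem \eqref{eq:mink}, the normalisation \eqref{eq:zokz} of the successive minima $\nu_{n,j}^{\ast}$, and the Mahler‑type duality \eqref{eq:ka2}, one should show that whenever a polynomial which is short at $\zeta$ has small derivative there, an even shorter one must be available, ultimately recovering the missing factor $X^{-1}$ in the estimate and pushing the exponent all the way up to $w_{n,n+1}(\zeta)$.

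Finally, the dual inequality $\widehat w_n^{\ast}(\zeta)\ge\widehat w_{n,n+1}(\zeta)$ follows from the identical construction carried out at \emph{every} sufficiently large scale $X$ — such scales being available precisely because $\nu<\widehat w_{n,n+1}(\zeta)$ — which produces, for every large $H$, an algebraic $\alpha$ of degree $\le n$ with $H(\alpha)\le H$ and $\vert\zeta-\alpha\vert\le H^{-\nu-1+o(1)}$, exactly what $\widehat w_n^{\ast}(\zeta)\ge\nu$ demands; here one also uses that $\vert\zeta-\alpha\vert\to0$ forces the heights $H(\alpha)$ to be unbounded, so the solutions really do occur for arbitrarily large $H(\alpha)$.
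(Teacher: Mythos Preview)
Your determinant opening is natural, and you are honest that it does not reach the target: from $|\det M|\ge 1$ and a cofactor expansion one can extract only $|P_l^{(i)}(\zeta)|\gg X^{\nu-n+1}$ for some $l$ and some $i\ge 1$, and even in the favourable case $i=1$ the root bound \eqref{eq:acc} then gives $|\zeta-\alpha|\ll X^{-2\nu+n-1}$, hence $w_n^{\ast}(\zeta)\ge 2w_{n,n+1}(\zeta)-n$, which is weaker than \eqref{eq:gans} exactly in the interesting range $w_{n,n+1}(\zeta)<n$. (For $i\ge 2$ there is the further difficulty that \eqref{eq:acc} uses $P'$, not $P^{(i)}$, so you would need a different root estimate that you do not supply.) The paragraph you offer as a remedy is not a proof: citing \eqref{eq:mink}, \eqref{eq:zokz}, \eqref{eq:ka2} and asserting that ``an even shorter [polynomial] must be available'' whenever $P'(\zeta)$ is small neither states a precise inequality nor indicates how to obtain one, and in fact no such ``shorter polynomial'' mechanism is what drives the theorem.

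The paper's argument is different in kind and bypasses the determinant computation. It compares the successive minima $\eta_{n,j}^{A}(Q)$ of the parallelepiped $K^{+}(Q)$ from \eqref{eq:askk} with those $\eta_{n,j}^{B}(Q)$ of a second body $\mathscr K(Q)=K^{+}(Q)\cap\{|P'(\zeta)|\le c(Q)Q^{1/n}\}$, obtained by intersecting $K^{+}(Q)$ with a slab around the hyperplane $P'(\zeta)=0$. A geometric volume estimate (Lemma~\ref{lemma} and Corollary~\ref{korol}) shows that $c(Q)$ can be chosen bounded below by a constant $B=B(n,\zeta)>0$ while making $\mathrm{vol}(\mathscr K(Q))$ a fixed fraction of $\mathrm{vol}(K^{+}(Q))$; Minkowski's second theorem \eqref{eq:mink}, applied to both bodies, then forces $\eta_{n,k}^{B}(Q)\ne\eta_{n,k}^{A}(Q)$ for some $k$, so the lattice point realising the $k$-th minimum of $K^{+}(Q)$ must violate the slab condition, i.e.\ satisfy $|P'(\zeta)|\gg Q^{1/n+\nu_{n,k}^{A}(Q)}$. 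Since $\nu_{n,k}^{A}(Q)\le\nu_{n,n+1}^{A}(Q)$, one obtains for every large $Q$ a polynomial with $H(P)\ll Q^{1/n+\nu_{n,n+1}^{A}(Q)}$, $|P(\zeta)|\le Q^{-1+\nu_{n,n+1}^{A}(Q)}$ and, crucially, $|P'(\zeta)|$ \emph{comparable to $H(P)$} rather than to some smaller power of it. Feeding this into \eqref{eq:acc} and \eqref{eq:ka3} gives the full exponent at once; running the construction along a sequence $Q_s\to\infty$ realising $\underline\nu_{n,n+1}^{\ast}$ yields \eqref{eq:gans}, and running it for all large $Q$ yields \eqref{eq:joerh}. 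The missing idea in your proposal is precisely this auxiliary sliced body together with its volume bound; without it there is no device to upgrade the weak lower bound on $|P'(\zeta)|$ coming from the determinant to one of full size.
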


\noindent Note that by (\ref{eq:4}),(\ref{eq:ka2}),(\ref{eq:ka3}) one can directly compute $w_{n,n+2-j}^{\prime}(\zeta)$ from $\widehat{w}_{n,j}(\zeta)$ 
as well as $\widehat{w}_{n,n+2-j}^{\prime}(\zeta)$ from $w_{n,j}(\zeta)$ for $1\leq j\leq n+1$, which leads to

\begin{equation} \label{eq:kb}
 w_{n,n+2-j}^{\prime}(\zeta)= \frac{1}{\widehat{w}_{n,j}(\zeta)}, \qquad  \widehat{w}_{n,n+2-j}^{\prime}(\zeta)= \frac{1}{w_{n,j}(\zeta)}.
\end{equation}

\noindent Combining (\ref{eq:kb}) with Theorem \ref{thm1} we immediately obtain

\begin{theo} \label{thm2}
 For any integer $n\geq 1$ and any $\zeta\in{\mathbb{R}}$ not algebraic of degree $\leq n$ we have

\begin{eqnarray*}
 w_{n}^{\ast}(\zeta)&\geq& \frac{1}{\widehat{w}_{n}^{\prime}(\zeta)}    \\    
 \widehat{w}_{n}^{\ast}(\zeta)&\geq& \frac{1}{w_{n}^{\prime}(\zeta)}.
\end{eqnarray*}

\end{theo}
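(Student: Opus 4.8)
The plan is to obtain Theorem \ref{thm2} from Theorem \ref{thm1} purely by a change of index in the duality relations (\ref{eq:kb}), so the real content has already been isolated in Theorem \ref{thm1} and in the transference identities preceding it. First I would specialise (\ref{eq:kb}) to $j=n+1$. In that case $n+2-j=1$, and by our normalisation $w_{n,1}^{\prime}(\zeta)=w_{n}^{\prime}(\zeta)$, $\widehat{w}_{n,1}^{\prime}(\zeta)=\widehat{w}_{n}^{\prime}(\zeta)$, so the two identities in (\ref{eq:kb}) become
\[
 w_{n}^{\prime}(\zeta)=\frac{1}{\widehat{w}_{n,n+1}(\zeta)},\qquad \widehat{w}_{n}^{\prime}(\zeta)=\frac{1}{w_{n,n+1}(\zeta)},
\]
equivalently $\widehat{w}_{n,n+1}(\zeta)=1/w_{n}^{\prime}(\zeta)$ and $w_{n,n+1}(\zeta)=1/\widehat{w}_{n}^{\prime}(\zeta)$. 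All quantities involved are finite and strictly positive when $\zeta$ is not algebraic of degree $\leq n$; indeed Dirichlet's theorem already gives $w_{n}^{\prime}(\zeta)\geq\widehat{w}_{n}^{\prime}(\zeta)\geq 1/n$, so the reciprocals are meaningful (and in the exceptional case where one of $w_{n}^{\prime}(\zeta),\widehat{w}_{n}^{\prime}(\zeta)$ is infinite the corresponding assertion of Theorem \ref{thm2} is trivial with the convention $1/\infty=0$).

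Next I would substitute these expressions directly into the two inequalities of Theorem \ref{thm1}. Inequality (\ref{eq:gans}) becomes $w_{n}^{\ast}(\zeta)\geq w_{n,n+1}(\zeta)=1/\widehat{w}_{n}^{\prime}(\zeta)$, and inequality (\ref{eq:joerh}) becomes $\widehat{w}_{n}^{\ast}(\zeta)\geq\widehat{w}_{n,n+1}(\zeta)=1/w_{n}^{\prime}(\zeta)$. These are precisely the two assertions of Theorem \ref{thm2}, so nothing further is needed.

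There is essentially no genuine obstacle at this step: all the substance is carried by Theorem \ref{thm1} together with the transference chain (\ref{eq:4}), (\ref{eq:ka2}), (\ref{eq:ka3}) used to establish (\ref{eq:kb}). The only thing to check with care is the index bookkeeping, namely that the first successive minimum ($j=1$) on the "primed" simultaneous side corresponds under Mahler-type duality to the last successive minimum ($j=n+1$) on the linear-form side, and that (\ref{eq:kb}) is legitimately invoked at the endpoint $j=n+1$ — which it is, since (\ref{eq:kb}) is asserted for the whole range $1\leq j\leq n+1$.
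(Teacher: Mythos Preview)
Your proposal is correct and follows exactly the route the paper takes: it derives Theorem~\ref{thm2} by specialising the duality identities (\ref{eq:kb}) to $j=n+1$ and substituting into the two inequalities of Theorem~\ref{thm1}. The paper in fact states this in one line (``Combining (\ref{eq:kb}) with Theorem~\ref{thm1} we immediately obtain''), so your additional remarks on positivity and the $1/\infty$ convention are a harmless elaboration rather than a different argument.
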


\noindent It remains to prove Theorem \ref{thm1}. A basic observation linking $w_{n}^{\ast}(\zeta)$ with the constants $w_{n,j}(\zeta)$
 is the fact that any non-zero polynomial $P\in{\mathbb{Z}[T]}$ with $P^{\prime}(\zeta)\neq 0$ has a root $\alpha$ satisfying 

\begin{equation}  \label{eq:acc}
 \vert \zeta-\alpha\vert \leq n\left\vert \frac{P(\zeta)}{P^{\prime}(\zeta)}\right\vert,
\end{equation}

\noindent since for $P(\zeta)= \prod_{1\leq i\leq n}(\zeta-\alpha_{i})$ with $\alpha$ the closest zero to $\zeta$ (i.e. minimizing $\vert \zeta-\alpha\vert$) we have 
$\left\vert\frac{P^{\prime}(\zeta)}{P(\zeta)}\right\vert= \vert\sum_{1\leq i\leq n} \frac{1}{\zeta-\alpha_{i}}\vert\leq n\max_{1\leq i\leq n}\frac{1}{\vert \zeta-\alpha_{i}\vert}=n\frac{1}{\vert \zeta-\alpha\vert}$.
So in order to get a sequence of good apprimation values $\alpha$ for a fixed $\zeta$, we only need to find a sequence of polynomials
 with small values $\left\vert\frac{P(\zeta)}{P^{\prime}(\zeta)}\right\vert$.
Note that the logarithm of $P(\zeta)$ to the basis $H(P)$ (see section 1.1) is directly connected to $w_{n}(\zeta),\widehat{w}_{n}(\zeta)$. Putting

\begin{equation} \label{eq:mader}
 w_{n}^{\ast}(\zeta,H):= \min_{P:H(P)\leq H} -\frac{\log \left\vert\frac{P(\zeta)}{P^{\prime}(\zeta)}\right\vert}{\log H}-1
\end{equation}

\noindent where for all fixed $H$, $P$ runs through all polynomials $P$ of height $H(P)\leq H$, by virtue of (\ref{eq:acc}) one easily deduces

\begin{eqnarray}
 w_{n}^{\ast}(\zeta)&\geq& \limsup_{H\to\infty}  w_{n}^{\ast}(\zeta,H)             \label{eq:hans}        \\                  
 \widehat{w}_{n}^{\ast}(\zeta)&\geq& \liminf_{H\to\infty} w_{n}^{\ast}(\zeta,H).     \label{eq:joerg}                       
\end{eqnarray}

\noindent 

\section{Proof of Theorem \ref{thm1}}

\subsection{Strategy of the proof}

In fixed dimension $n$ we consider two successive minima problems. On the one hand the successive minima $\eta_{n,j}^{\ast}(Q)$ of the 
bodies $K^{+}(Q)$ with respect to the lattice $\Lambda^{+}$ defined in (\ref{eq:ask}),(\ref{eq:askk}) and the resulting functions $\nu_{n,j}^{\ast}(Q)$ 
 arising from $\eta_{n,j}^{\ast}(Q)$ from section 1.2.                                                                                             
On the other hand, we compress the bodies $K^{+}(Q)$ in direction orthogonal to the hyperplane $P^{\prime}(\zeta)=0$ by a fixed factor and consider
 their successive minima with respect to $\Lambda^{+}$.
By applying Minkowski's Theorem (\ref{eq:mink}) to both systems we will infer that at least one successive minimum of these two systems must 
differ from the corresponding successive minimum of the other system.
This will imply the existence of lattice points (in fact points corresponding to a successive minimum!) with 
relatively ''large'' values $\vert P^{\prime}(\zeta)\vert \approx H(P)$,
which is helpful for lower bounds for $w_{n}^{\ast}(\zeta),\widehat{w}_{n}^{\ast}(\zeta)$ in view of (\ref{eq:acc}) (or (\ref{eq:mader})). 
Assuming this point corresponds to the last (i.e. $(n+1)$-st) successive minimum gives a lower estimate for $w_{n}^{\ast}(\zeta),\widehat{w}_{n}^{\ast}(\zeta)$
 and together with the very intuitive geometric Lemma \ref{lemma} (although slightly techincal to prove) concerning the volume of the compressed bodies,
(\ref{eq:ka3}) and (\ref{eq:hans}) resp. (\ref{eq:joerg})
 leads to the estimates (\ref{eq:gans}) resp. (\ref{eq:joerh}) in Theorem \ref{thm1}.

\subsection{Exact proof of Theorem \ref{thm1}}

We will assume $\zeta$ fixed and identify a point $P=(x,y_{1},\ldots,y_{n})\in{\mathbb{Z}^{n+1}}$ with the polynomial $P(\zeta)=x+\zeta y+\ldots +\zeta^{n}y_{n}$.
For technical reasons we will call the successive minima problem concerning $K^{+}(Q)$ and $\Lambda^{+}$ {\em system A} throughout section 2.
We denote it with superscript $A$ and for simplicity we write $\eta_{n,j}^{A}(Q):= \eta_{n,j}^{\ast}(Q), \nu_{n,j}^{A}(Q):=\nu_{n,j}^{\ast}(Q)$
 with $\eta_{n,j}^{\ast}(Q),\nu_{n,j}^{\ast}(Q)$ as defined in section 1.2. \\
Furthermore, define {\em system B} as the successive minimum problem concerning $\Lambda^{+}$ and the convex body

\begin{equation}  \label{eq:fritzi}
 \mathscr{K}(Q):=K^{+}(Q)\cap c(Q)\cdot W(Q)
\end{equation}

\noindent with

\[
 W(Q):= \left\{(x,y_{1},\ldots ,y_{n})\in{\mathbb{R}^{n+1}}: \left\vert y_{1}+2\zeta y_{2}+\cdots+ n\zeta^{n-1}y_{n}\right\vert \leq Q^{\frac{1}{n}}\right\}
\]

\noindent and positive real numbers $c(Q)$ to be chosen later.                     
 Observe that $W(Q)$ is just the set of points $P\in{\mathbb{R}^{n+1}}$ with $\vert P^{\prime}(\zeta)\vert \leq Q^{\frac{1}{n}}$, so the convex
bodies $K^{+}(Q)$ are ''somehow compressed'' by some factor $c(Q)$ in the direction orthogonal to the hyperplane $P^{\prime}(\zeta)=0$ 
(although this is not quite true as the boundary changes shape).
Successive minima functions $\eta_{n,j}^{B}(Q), \nu_{n,j}^{B}(Q)$ arise from system B similarly as in system A. Note that by construction we have
$\mathscr{K}(Q)\subset K^{+}(Q)$ and therefore $\eta_{n,j}^{B}(Q)\leq \eta_{n,j}^{A}(Q)$ and $\nu_{n,j}^{B}(Q)\leq \nu_{n,j}^{A}(Q)$ for every $1\leq j\leq n+1$ and $Q>1$.  \\

\noindent We now choose the constants $c(Q)$ in (\ref{eq:fritzi}) such that

\begin{equation} \label{eq:nwesw}
 \rm{vol}(\mathscr{K}(Q))= \frac{1}{2(n+1)!}\rm{vol}(K^{+}(Q))<\frac{1}{(n+1)!}\rm{vol}(K^{+}(Q))=\frac{2^{n+1}}{(n+1)!}.
\end{equation}

\noindent Clearly, this is possible as the volume of $\mathscr{K}(Q)$ with arbitrary $c(Q)$ in (\ref{eq:fritzi}) 
depends continuously on $c(Q)$ and for sufficiently large $c(Q)\geq c_{0}(Q)$
we have $K^{+}(Q)=\mathscr{K}(Q)$, so in particular $\rm{vol}(K^{+}(Q))=\rm{vol}(\mathscr{K}(Q))$, as well as $\rm{vol}(\mathscr{K}(Q))=0$ for $c(Q)=0$.
By the intermediate value theorem and as the volume increases strictly as $c(Q)$ increases (as long as $\mathscr{K}(Q)\subsetneq K^{+}(Q)$), 
there is a unique $c(Q)$ with (\ref{eq:nwesw}) for every $Q>1$.\\
By (\ref{eq:mink}) we infer that for any $Q>1$, for at least one $k=k(Q)\in{\{1,2,\ldots,n+1\}}$ we have strict inequality $\eta_{n,k}^{B}(Q)< \eta_{n,k}^{A}(Q)$.    
By the definition of successive minima and the choice of our convex bodies $\mathscr{K}(Q)$ and $K^{+}(Q)$ this gives the existence of 
vectors $\boldsymbol{d}(Q)\in{\mathbb{Z}^{n+1}}$ with

\[
\boldsymbol{d}(Q)\in{\left( Q^{\nu_{n,k}^{A}(Q)}K^{+}(Q) \right) \setminus{\left( Q^{\nu_{n,k}^{B}(Q)}{\mathscr{K}(Q)} \right)}}. 
\]

\noindent We will again identify any such $\boldsymbol{d}(Q)=(x,y_{1},\ldots,y_{n})$ with the corresponding polynomial $P(\zeta)=x+\zeta y_{1}+\ldots +\zeta^{n} y_{n}$.
We consider these polynomials $P(\zeta)$ as $Q$ increases and will drop the dependence of $P$ from $Q$ in the notation as no misunderstandings can occur.  
 Since $\mathscr{K}(Q)$ only differs from $K^{+}(Q)$ in direction orthogonal to $P^{\prime}(\zeta)=0$ we have

\begin{equation} \label{eq:propellar}
\vert P^{\prime}(\zeta)\vert> Q^{\nu_{n,k}^{A}(Q)}\cdot c(Q)Q^{\frac{1}{n}}= c(Q)Q^{\nu_{n,k}^{A}(Q)+\frac{1}{n}}.
\end{equation}
  
\noindent On the other hand, as $\boldsymbol{d}(Q)\in{Q^{\nu_{n,j}^{A}(Q)}}K^{+}(Q)$ we have  

\begin{eqnarray}
 \vert P(\zeta)\vert &\leq& Q^{-1+\nu_{n,k}^{A}(Q)},   \label{eq:prepellar}  \\
 H(P) &\ll& Q^{\frac{1}{n}+\nu_{n,k}^{A}(Q)}.                 \label{eq:prapellar}
\end{eqnarray}

\noindent with constants in $\ll$ depending only on $n,\zeta$. More precisely, as by defintion we have $\vert y_{t}\vert \leq Q^{\frac{1}{n}+\nu_{n,k}^{A}(Q)}$ for $1\leq t\leq k$
and clearly $\vert P(\zeta)\vert \leq 1$ for $Q$ sufficiently large, we have the estimates

\begin{equation}  \label{eq:stojko}
\vert x\vert \leq \vert P(\zeta)\vert+(1+\vert \zeta\vert +\cdots +\vert \zeta\vert^{n})\max_{1\leq t\leq n} \vert y_{t}\vert\leq 1+(1+\vert \zeta\vert +\cdots +\vert \zeta\vert^{n}) Q^{\frac{1}{n}+\nu_{n,k}^{A}(Q)}. 
\end{equation}

\noindent So we infer $H(P)\leq 1+(1+\vert \zeta\vert +\cdots +\vert \zeta\vert^{n}) Q^{\frac{1}{n}+\nu_{n,k}^{A}(Q)}$ and thus (\ref{eq:prapellar}).\\
Note that (\ref{eq:propellar}),(\ref{eq:prepellar}),(\ref{eq:prapellar}) hold for any large $Q$ and $\nu_{n,k}^{A}(Q)\leq \nu_{n,n+1}^{A}(Q)$ as well as
$\underline{\nu}_{n,n+1}^{A}-\epsilon\leq \nu_{n,n+1}(Q)\leq \overline{\nu}_{n,n+1}+\epsilon$ for all $\epsilon>0$ and $Q\geq Q(\epsilon)$. So on the one hand 
we can choose a sequence of values $(Q_{s})_{s\geq 1}\to\infty$ such that the corresponding polynomials for any $\epsilon>0$ 
and sufficiently large $Q\geq Q_{0}(\epsilon)$ satisfy

\begin{eqnarray}
 \vert P(\zeta)\vert &\leq& Q^{-1+\underline{\nu}_{n,n+1}^{A}+\epsilon}  \label{eq:nadara}  \\
 \vert P^{\prime}(\zeta)\vert &\geq& c(Q)Q^{\frac{1}{n}+\underline{\nu}_{n,n+1}^{A}}  \label{eq:nedere} \\
 H(P) &\ll& Q^{\frac{1}{n}+\underline{\nu}_{n,n+1}^{A}}   \label{eq:nuduru}
\end{eqnarray}

\noindent with constants depending only on $n,\zeta$ in $\ll$. \\
On the other hand, for {\em any} sufficiently large $Q\geq Q_{0}(\epsilon)$ we clearly have

\begin{eqnarray}
 \vert P(\zeta)\vert &\leq& Q^{-1+\overline{\nu}_{n,n+1}^{A}+\epsilon}  \label{eq:nadaraz}  \\
 \vert P^{\prime}(\zeta)\vert &\geq& c(Q)Q^{\frac{1}{n}+\overline{\nu}_{n,n+1}^{A}}  \label{eq:nederez} \\
 H(P) &\ll& Q^{\frac{1}{n}+\overline{\nu}_{n,n+1}^{A}}.   \label{eq:nuduruz}
\end{eqnarray}

\noindent Assume in our present situation, i.e. $c(Q)$ defined by (\ref{eq:fritzi}),(\ref{eq:nwesw}), we already knew 

\begin{equation}  \label{eq:mileniza}
 \liminf_{Q\to\infty}\log_{Q}c(Q)\geq 0,
\end{equation}

\noindent which will be shown in Corollary \ref{korol} from Lemma \ref{lemma}. Then a choice of polynomials leading to
 (\ref{eq:nadara}),(\ref{eq:nedere}),(\ref{eq:nuduru}) gives in combination 
with (\ref{eq:mader}),(\ref{eq:hans}) and $\epsilon\to 0$

\begin{equation} \label{eq:normmal}
 w_{n}^{\ast}(\zeta)+1\geq -\frac{\log\left(\frac{P(\zeta)}{P^{\prime}(\zeta)}\right)}{\log(H(P))}\geq \frac{n+1}{n}\frac{1}{\frac{1}{n}+\underline{\nu}_{n,n+1}^{A}}= w_{n,n+1}(\zeta)+1,
\end{equation}

\noindent the equality on the right being just a variaton of (\ref{eq:ka3}).
Similarly (\ref{eq:nadaraz}),(\ref{eq:nederez}),(\ref{eq:nuduruz}) gives in combination with (\ref{eq:mader}),(\ref{eq:joerg}),(\ref{eq:ka3}) and $\epsilon\to 0$

\begin{equation} \label{eq:normmalz}
 \widehat{w}_{n}^{\ast}(\zeta)+1\geq -\frac{\log\left(\frac{P(\zeta)}{P^{\prime}(\zeta)}\right)}{\log(H(P))}\geq \frac{n+1}{n}\frac{1}{\frac{1}{n}+\overline{\nu}_{n,n+1}^{A}}= \widehat{w}_{n,n+1}(\zeta)+1.
\end{equation}

\noindent Subtracting one from both sides of (\ref{eq:normmal}),(\ref{eq:normmalz}) establishes the assertions of Theorem \ref{thm1}.\\ 
It remains to prove (\ref{eq:mileniza}). In fact, we prove that $c(Q)$ is even bounded below uniformly in the parameter $Q$.

\begin{lemm} \label{lemma}[Geometric lemma]\\
 Given $n\geq 2,\zeta\in{\mathbb{R}}$ as well as positive real parameters $R$ and $Q>1$, consider the sets 

\begin{eqnarray*}
 \chi_{A}(Q)&:=& \left\{(x,y_{1},\ldots ,y_{n})\in{\mathbb{R}^{n+1}}: \vert P(\zeta)\vert\leq Q^{-1}\right\}    \\
 \chi_{B}(R)&:=& \left\{(x,y_{1},\ldots ,y_{n})\in{\mathbb{R}^{n+1}}: \vert P^{\prime}(\zeta)\vert\leq R\right\}  \\
 \chi_{C}(Q)&:=& \left\{(x,y_{1},\ldots ,y_{n})\in{\mathbb{R}^{n+1}}: \vert y_{t}\vert \leq Q^{\frac{1}{n}}, \quad 1\leq t\leq n\right\},
\end{eqnarray*}

\noindent where $P(\zeta)=x+\zeta y_{1}+\cdots +\zeta^{n}y_{n}$.\\
Then for sufficiently large $Q$ and all $R$ we have

\[
 \rm{vol}\left(\chi_{A}(Q)\cap \chi_{B}(R)\cap \chi_{C}(Q)\right)\leq ERQ^{-\frac{1}{n}}
\]

\noindent with some constant $E=E(n,\zeta)$ independent of $Q$. 

\end{lemm}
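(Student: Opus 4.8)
The plan is to reduce the problem to estimating the volume of a slab in the $(y_1,\dots,y_n)$-space intersected with a box, and then bound that slab's volume by an elementary integral-geometry argument. Concretely, I would first integrate out the $x$-variable. For fixed $(y_1,\dots,y_n)$, the condition $|P(\zeta)| = |x + \zeta y_1 + \dots + \zeta^n y_n| \le Q^{-1}$ confines $x$ to an interval of length $2Q^{-1}$, and this is independent of the $y_i$. Hence
\[
\mathrm{vol}\left(\chi_A(Q)\cap \chi_B(R)\cap \chi_C(Q)\right) = 2Q^{-1}\cdot \mathrm{vol}_{n}\Big(S\Big),
\]
where $S \subset \mathbb{R}^n$ is the set of $(y_1,\dots,y_n)$ with $|y_t|\le Q^{1/n}$ for all $t$ and $|y_1 + 2\zeta y_2 + \dots + n\zeta^{n-1}y_n| \le R$. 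So it remains to show $\mathrm{vol}_n(S) \le E' R Q^{(n-1)/n}$ for a constant $E' = E'(n,\zeta)$; multiplying by $2Q^{-1}$ then gives the claimed bound $ERQ^{-1/n}$.

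\textbf{Bounding the slab volume.} The set $S$ is the intersection of the cube $[-Q^{1/n},Q^{1/n}]^n$ with the slab $\{|\langle \mathbf{b},\mathbf{y}\rangle|\le R\}$, where $\mathbf{b} = (1, 2\zeta, 3\zeta^2, \dots, n\zeta^{n-1})$ is a \emph{nonzero} vector (its first coordinate is $1$). The width of this slab in the direction $\mathbf{b}/\|\mathbf{b}\|$ is $2R/\|\mathbf{b}\|$. A slab of width $w$ intersected with a cube of side $2Q^{1/n}$ has $n$-dimensional volume at most $w$ times the $(n-1)$-dimensional volume of the projection of the cube onto the hyperplane $\mathbf{b}^\perp$, and that projection is contained in a ball (or a box) of radius $O(Q^{1/n})$, hence has $(n-1)$-volume $\le c(n) Q^{(n-1)/n}$. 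This yields $\mathrm{vol}_n(S) \le \frac{2R}{\|\mathbf{b}\|}\cdot c(n) Q^{(n-1)/n}$. Since $\|\mathbf{b}\|\ge 1$, we may take $E' = 2c(n)$, which depends only on $n$; absorbing the (bounded, $\zeta$-dependent) comparison constants between the $\ell^\infty$-cube and the $\ell^2$-ball gives a constant $E = E(n,\zeta)$, and in fact the cleanest route is to work with the cube throughout and note $\mathrm{vol}_n(S) \le 2R\cdot(2Q^{1/n})^{n-1}$ by Fubini after solving for $y_1$ in terms of $y_2,\dots,y_n$: for fixed $(y_2,\dots,y_n)\in[-Q^{1/n},Q^{1/n}]^{n-1}$, the variable $y_1$ is confined to an interval of length $2R$ (using that the coefficient of $y_1$ is $1$), so $\mathrm{vol}_n(S)\le 2R\cdot (2Q^{1/n})^{n-1}$, and $E = 2^n$ works, independent even of $\zeta$.

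\textbf{The main obstacle} is genuinely minor here: it is simply making sure the volume factorizes cleanly and that the ``width'' direction is handled by the right variable. The key point that makes everything work is that in $P'(\zeta) = y_1 + 2\zeta y_2 + \dots + n\zeta^{n-1}y_n$ the coefficient of $y_1$ equals $1$, so we can always use $y_1$ as the ``pivot'' variable confined to a short interval of length $2R$, irrespective of $\zeta$; this is why no lower bound on $|\zeta|$ or nondegeneracy hypothesis beyond $n\ge 2$ is needed. The phrase ``for sufficiently large $Q$'' in the statement is used only to guarantee $|P(\zeta)|\le 1$ along the relevant lattice points elsewhere in the argument and plays no role in this volume computation, which holds for all $Q>0$ and all $R>0$. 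Writing this out carefully with Fubini's theorem and keeping track that $|y_t|\le Q^{1/n}$ for $t=2,\dots,n$ contributes the factor $(2Q^{1/n})^{n-1}$ while the $x$-integration contributes $2Q^{-1}$ completes the proof, giving the bound with $E = 2^{n+1}$.
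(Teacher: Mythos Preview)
Your argument is correct and in fact considerably cleaner than the paper's own proof. The key observation you exploit, which the paper does not, is that the linear form $P(\zeta)=x+\zeta y_1+\cdots+\zeta^n y_n$ has coefficient $1$ in front of $x$, and the linear form $P'(\zeta)=y_1+2\zeta y_2+\cdots+n\zeta^{n-1}y_n$ has coefficient $1$ in front of $y_1$; this lets you apply Fubini directly in the original coordinates, first eliminating $x$ (an interval of length $2Q^{-1}$) and then $y_1$ (an interval of length at most $2R$), leaving the box $[-Q^{1/n},Q^{1/n}]^{n-1}$ in $(y_2,\ldots,y_n)$. This yields the explicit bound $2^{n+1}RQ^{-1/n}$, valid for \emph{all} $Q>0$ and with a constant independent of $\zeta$.

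By contrast, the paper first rotates $\mathbb{R}^{n+1}$ so that the hyperplane $P(\zeta)=0$ becomes $x=0$, encloses the rotated set in a cube of side $\sqrt{n}\,C(n,\zeta)Q^{1/n}$, and then applies Fubini twice with respect to the rotated coordinates, picking up angle-dependent constants at each step. The resulting bound $E(n,\zeta)RQ^{-1/n}$ therefore carries an inexplicit $\zeta$-dependence. Your direct pivot on $x$ and $y_1$ bypasses the rotation entirely, is shorter, and strengthens the conclusion; you are also right that the qualifier ``for sufficiently large $Q$'' plays no role in the volume estimate itself. The first half of your slab discussion (via $\Vert\mathbf{b}\Vert$ and projections) is superfluous once you have the second Fubini argument, so in a final write-up you should simply keep the $x\to y_1\to(y_2,\ldots,y_n)$ integration and drop the detour through Euclidean widths.
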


\begin{proof}
Note first that $\chi_{A}(Q)$ is bounded by the two translates of the {\em fixed} hyperplane
$P(\zeta)=0$ by $Q^{-1}$ in direction orthogonal to $P(\zeta)=0$ to both sides of this hyperplane $P(\zeta)=0$.
 In particular $\chi_{A}(Q)$ converges to $P(\zeta)=0$ for $Q\to\infty$, which is not so important, however.
 Similarly, $\chi_{B}(R)$ is the space between two hyperplanes parallel to the hyperplane $P^{\prime}(\zeta)=0$ with distance $R$ in both directions
from the hyperplane $P^{\prime}(\zeta)=0$. 
Note that with respect to any other {\em fixed} direction $v\in{\mathbb{R}^{n+1}}$ with $v\notin{\{P:P(\zeta)=0\}}$ resp. $v\notin{\{P:P^{\prime}(\zeta)=0\}}$
(which is equivalent to $P(v)\neq 0$ resp. $P^{\prime}(v)\neq 0$)
 $\chi_{A}(Q)$ resp. $\chi_{B}(R)$ has width at most $N_{1}Q^{-1}$ resp. $N_{2}R$ for fixed constants $N_{1},N_{2}$ depending on $v$ but
 independent from $Q,R$. Throughout the proof we will make use of this for some given $v$ determined by $n,\zeta$.\\
Finally, $\chi_{C}(Q)$ just bounds the coordinates $(y_{1},y_{2},\ldots, y_{n})$ of
the vectors $(x,y_{1},\ldots ,y_{n})\in{\mathbb{R}^{n+1}}$ in dependence of $Q$.   \\

\noindent Observe that by these restrictions for $\chi_{C}(Q)$ and in view of the left hand inequality in (\ref{eq:stojko})
 we can assume $\vert x\vert \leq C(n,\zeta)Q^{\frac{1}{n}}$ 
for some constant $C(n,\zeta)$ independent of $Q$ without loss of generality.
 Thus every coordinate $(x,y_{1},\ldots,y_{n})$ of a point in   
$\chi_{A}(Q)\cap \chi_{B}(R)\cap \chi_{C}(Q)$ is bounded by $C(n,\zeta)Q^{\frac{1}{n}}$. 
Consequently for every $Q>1$ any rotation of the set $\chi_{A}(Q)\cap \chi_{B}(R)\cap \chi_{C}(Q)$
lies in the centralsymmetric                                                                                              
cube with side length $\sqrt{n}C(n,\zeta)Q^{\frac{1}{n}}$ and surfaces parallel to the hyperplanes $x=0,y_{1}=0,\ldots,y_{n}=0$ given by

\[
\kappa(Q):= \left\{(x,y_{1},\ldots,y_{n}):  \vert x\vert \leq \sqrt{n}C(n,\zeta)Q^{\frac{1}{n}},
\vert y_{t}\vert \leq \sqrt{n}C(n,\zeta)Q^{\frac{1}{n}}, 1\leq t\leq k\right\}.
\]
 
\noindent We apply a rotation $\varpi$ on $\mathbb{R}^{n+1}$ in such a way that the hyperplane $P(\zeta)=0$ is sent to the hyperplane 
$H_{0}$ defined by $x=0$. 
 Let $H_{1}$ be the image of the hyperplane $P^{\prime}(\zeta)=0$ under $\varpi$. Clearly $H_{0}\neq H_{1}$
as $P^{\prime}(\zeta)$ has lower degree than $P(\zeta)$ and hence the intersection $H_{0}\cap H_{1}$ has dimension $(n-1)$.
As rotations preserve volumes,

\[
 \rm{vol}\left(\varpi(\chi_{A}(Q)\cap \chi_{B}(R)\cap \chi_{C}(Q))\right)= \rm{vol}(\chi_{A}(Q)\cap \chi_{B}(R)\cap \chi_{C}(Q)).
\]

\noindent Moreover note also that intersecting $\chi_{A}(Q)\cap \chi_{B}(R)\cap \chi_{C}(Q)$
 with $\kappa(Q)$ doesn't change the volume as stated (we may replace $\chi_{C}(Q)$ by $\kappa(Q)$).\\
More generally, for real numbers $a,b$ define 

\begin{eqnarray*}
H_{0,a}&:=&\varpi\left(\left\{P:P(\zeta)=a\right\}\right)=\left\{(x,y_{1},\ldots ,y_{n})\in{\mathbb{R}^{n+1}}:x=a\right\}, \\
H_{1,b}&:=&\varpi\left(\left\{P:P^{\prime}(\zeta)=b\right\}\right),
\end{eqnarray*}

\noindent such that in particular $H_{0}=H_{0,0}, H_{1}=H_{1,0}$.\\
\noindent From the preliminary descriptions of $\chi_{A}(Q),\chi_{B}(R)$ we easily see

\begin{eqnarray*}
 \varpi(\chi_{A}(Q))&=& \bigcup_{a\in{[-Q^{-1},Q^{-1}]}} H_{0,a}, \\
 \varpi(\chi_{B}(Q))&=& \bigcup_{b\in{[-R,R]}} H_{1,b},
\end{eqnarray*}

\noindent and by construction of $\kappa(Q)$ we conclude

\begin{equation} \label{eq:leniza}
 \varpi \left(\chi_{A}(Q) \cap \chi_{B}(R) \cap \chi_{C}(Q) \right) \subset \left( \cup_{a\in{[-Q^{-1},Q^{-1}]}} H_{0,a}\right) \bigcap \left( \cup_{b\in{[-R,R]}} H_{1,b} \right) \bigcap \kappa(Q).
\end{equation}

\noindent In view of (\ref{eq:leniza}) and since rotations don't change the volume 
it is sufficient to prove the upper estimate $ERQ^{-\frac{1}{n}}$
 for the volume of the right hand side of (\ref{eq:leniza}), i.e. 

\begin{equation}  \label{eq:milanas}
 \rm{vol}\left(\left(\cup_{a\in{[-Q^{-1},Q^{-1}]}} H_{0,a}\right)\bigcap \left(\cup_{b\in{[-R,R]}} H_{1,b}\right) \bigcap \kappa(Q)\right) \leq ERQ^{-\frac{1}{n}},
\end{equation}

\noindent to establish the assertions of the Lemma.\\
In order to do this we use Fubini's Theorem twice. We first give upper bounds for the $(n-1)$-dimensional
volumes of the intersections $H_{0,a}\cap H_{1,b}\cap \kappa(Q)$, then apply Fubini' Theorem to derive upper bounds
for the $n$-dimensional volumes of $H_{0,a}\cap \bigcup_{b\in{[-R,R]}} H_{1,b}\cap \kappa(Q)$ for every $a\in{[-Q^{-1},Q^{-1}]}$
and then again apply Fubini's Theorem by integrating these $n$-dimensional volumes along the $x$-axis to finally 
derive the required upper bound.\\  
Clearly, the $(n-1)$-dimensional volume of $H_{0}\cap H_{1}\cap \kappa(Q)$ is proportional to $Q^{\frac{n-1}{n}}$, let's say 
$\rm{vol}(H_{0}\cap H_{1}\cap \kappa(Q))= DQ^{\frac{n-1}{n}}$ for a constant $D$ depending only on
the angle between $H_{0}$ and $H_{1}$ which is determined by $n,\zeta$ (in particular independent of $Q$). 
Similarly we see that we can find a constant $D_{0}=D_{0}(n,\zeta)$ such that simultaneously for all $a,b\in{\mathbb{R}}$ we have 

\begin{equation} \label{eq:lovemilena}
 \rm{vol} (H_{0,a}\cap H_{1,b}\cap \kappa(Q)) \leq D_{0}Q^{\frac{n-1}{n}}, \qquad a,b\in{\mathbb{R}},
\end{equation}

\noindent as all $H_{0,a}\cap H_{1,b}$ are $(n-1)$-dimensional subspaces in a $n$-dimensional cube with side length proportional to $Q^{\frac{1}{n}}$.\\  
Since $\chi_{B}(R)$ (and so $\varpi(\chi_{B}(R))=\cup_{b\in{[-R,R]}} H_{1,b}$ too) has width $R$ in direction orthogonal to $H_{1,b}$, 
and in view of (\ref{eq:lovemilena}), we infer that 

\begin{equation} \label{eq:lumumbart}
 \rm{vol}_{n}\left(H_{0,a}\bigcap \left(\cup_{b\in{[-R,R]}} H_{1,b}\right) \bigcap \kappa(Q)\right) \leq D_{0}Q^{\frac{n-1}{n}}\cdot D_{1}R, \qquad a\in{\mathbb{R}}
\end{equation}
 
\noindent with some constant $D_{1}$ depending only on the angle between the $x$-axis and $H_{1}$, which itself is determined by $n,\zeta$.
 So we have estimated the $n$-dimensional volume of $\varpi\left(\chi_{B}(R)\cap \chi_{C}(Q)\right)=\left(\cup_{b\in{[-R,R]}} H_{1,b}\right) \cap \kappa(Q)$ 
in every hyperplane $H_{0,a}$. 
Observe that $\varpi\left(\chi_{A}(Q)\cap \chi_{B}(R)\cap \chi_{C}(Q)\right)$ has width $mQ^{-1}$ in direction of the $x$-axis
with a constant $m$ depending only on the angle between the hyperplane $P(\zeta)=0$ and the hyperplane $H_{0}$, which is again determined by $n,\zeta$ 
(so in particular independent of $Q$).
We apply Fubini's Theorem to (\ref{eq:lumumbart}) and conclude

\begin{equation*}
 \rm{vol}\left(\left(\cup_{a\in{[-Q^{-1},Q^{-1}]}} H_{0,a}\right)\bigcap \left(\cup_{b\in{[-R,R]}} H_{1,b}\right) \bigcap \kappa(Q)\right)\leq mQ^{-1}\cdot D_{0}D_{1}Q^{\frac{n-1}{n}}R=mD_{0}D_{1}Q^{-\frac{1}{n}}R.  
\end{equation*}

\noindent Inequality (\ref{eq:milanas}) and hence the assertion of the Lemma follows with $E=E(n,\zeta):=mD_{0}D_{1}$.

\end{proof}

\begin{coro}  \label{korol}
 In the context of the first part of the proof of Theorem \ref{thm1} (ie (\ref{eq:nwesw}) holds) we have $c(Q)\geq B$ for some constant $B$
uniformly in $Q$, in particular (\ref{eq:mileniza}) holds.
\end{coro}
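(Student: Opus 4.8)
The plan is to apply the geometric Lemma \ref{lemma} directly to the body $\mathscr{K}(Q)$ determined by (\ref{eq:fritzi}) and (\ref{eq:nwesw}) (for $n=1$ the statement is classical, so assume $n\geq 2$). First I would rewrite $\mathscr{K}(Q)$ in the notation of that Lemma. Since $P'(\zeta)=y_{1}+2\zeta y_{2}+\cdots+n\zeta^{n-1}y_{n}$ is a linear form in the coordinates $(x,y_{1},\ldots,y_{n})$ not involving $x$, dilating $W(Q)$ by the factor $c(Q)$ merely relaxes its defining inequality to $|P'(\zeta)|\leq c(Q)Q^{1/n}$, i.e.\ $c(Q)\cdot W(Q)=\chi_{B}(c(Q)Q^{1/n})$; and directly from (\ref{eq:askk}) one has $K^{+}(Q)=\chi_{A}(Q)\cap\chi_{C}(Q)$. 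Hence
\[
 \mathscr{K}(Q)=\chi_{A}(Q)\cap\chi_{B}\!\left(c(Q)Q^{1/n}\right)\cap\chi_{C}(Q).
\]

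Next I would invoke Lemma \ref{lemma} with $R:=c(Q)Q^{1/n}$. Crucially the Lemma is stated uniformly in $R>0$, so the a priori unknown size of $c(Q)Q^{1/n}$ causes no difficulty. It yields, for all sufficiently large $Q$,
\[
 \mathrm{vol}(\mathscr{K}(Q))\;\leq\; E\cdot c(Q)Q^{1/n}\cdot Q^{-1/n}\;=\;E\cdot c(Q),
\]
with $E=E(n,\zeta)$ independent of $Q$.

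Finally I would combine this with the defining property (\ref{eq:nwesw}), namely $\mathrm{vol}(\mathscr{K}(Q))=\frac{2^{n+1}}{2(n+1)!}$, to obtain
\[
 c(Q)\;\geq\;\frac{1}{E}\cdot\frac{2^{n+1}}{2(n+1)!}\;=:\;B\;>\;0
\]
for all sufficiently large $Q$; since the finitely many remaining values $c(Q)$ are each positive by the intermediate-value argument preceding the statement, $B$ can be taken uniform in $Q$. Then $\log_{Q}c(Q)=\log c(Q)/\log Q$ is $\geq 0$ when $B\geq 1$ and is $\geq \log B/\log Q\to 0$ when $0<B<1$, so in either case $\liminf_{Q\to\infty}\log_{Q}c(Q)\geq 0$, which is exactly (\ref{eq:mileniza}).

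I do not anticipate a genuine obstacle here, since all the real work sits in Lemma \ref{lemma}. The only delicate point is to make the identification $\mathscr{K}(Q)=\chi_{A}(Q)\cap\chi_{B}(\cdot)\cap\chi_{C}(Q)$ exact — in particular to check that dilation by $c(Q)$ scales precisely the bound on $|P'(\zeta)|$ and nothing else, which is what allows the clean substitution $R=c(Q)Q^{1/n}$ — together with the observation that the Lemma's estimate holds for every $R$, which removes any concern about how large $c(Q)Q^{1/n}$ might be.
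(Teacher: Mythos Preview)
Your argument is correct and is essentially identical to the paper's own proof: both identify $\mathscr{K}(Q)$ as $\chi_{A}(Q)\cap\chi_{B}(c(Q)Q^{1/n})\cap\chi_{C}(Q)$, apply Lemma~\ref{lemma} with $R=c(Q)Q^{1/n}$, and compare the resulting bound $E\,c(Q)$ with the prescribed volume $\tfrac{2^{n+1}}{2(n+1)!}$ to obtain $c(Q)\geq B:=\tfrac{2^{n+1}}{2(n+1)!E}$. Your extra remarks on the case $n=1$, on the finitely many small $Q$ (the Lemma being stated only for sufficiently large $Q$), and on deducing the $\liminf$ in (\ref{eq:mileniza}) are reasonable clarifications that the paper leaves implicit.
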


\begin{proof}
 By definition of $c(Q)$ we have $c(Q)=R(Q)Q^{-\frac{1}{n}}$ for $R=R(Q)$ that satisfies 

\[
 \rm{vol}\left(\chi_{A}(Q)\cap \chi_{B}(R)\cap \chi_{C}(Q)\right)=\frac{2^{n+1}}{2(n+1)!}.
\]

\noindent We may apply Lemma \ref{lemma} and with respect to the constant $E=E(n,\zeta)$ of this lemma this yields

\[
 \frac{2^{n+1}}{2(n+1)!}\leq ER(Q)Q^{-\frac{1}{n}},
\]

\noindent or equivalently $B\cdot Q^{\frac{1}{n}}\leq R(Q)=Q^{\frac{1}{n}}c(Q)$ with $B:=\frac{2^{n+1}}{2(n+1)!E}$ for all $Q>1$  
and we conclude $c(Q)\geq B$ uniformly in the parameter $Q$.

\end{proof}

\noindent Thus we have established Theorem \ref{thm1} and consequently Theorem \ref{thm2}.\\

\noindent Remarks: 1) One can show $\rm{vol}\left(\chi_{A}(Q)\cap \chi_{B}(R)\cap \chi_{C}(Q)\right)\geq FRQ^{-\frac{1}{n}}$ 
for some constant $F=F(n,\zeta)$ independent from $Q$ with a proof similar to the one of Lemma \ref{lemma}.
 So by arguments very similar to those in the proof of Corollary \ref{korol}
 we have that in fact $c(Q)$ is also uniformly bounded above by a positive constant and consequently in combination with 
 (\ref{eq:mileniza}) we actually have $\lim_{Q\to\infty} \log_{Q}c(Q)=0$.\\
2) Observe that all the constants occuring throughout the proof of Lemma \ref{lemma} can be estimated explicitely
in dependence of $n,\zeta$ so we can write $c(Q)\leq K(n,\zeta)$ with an effective constant $K(n,\zeta)$ in (\ref{eq:nedere}),(\ref{eq:nederez}).
However, the $\epsilon$-term in the exponent of (\ref{eq:nadara}),(\ref{eq:nadaraz}) doesn't allow any improvements in 
(\ref{eq:normmal}),(\ref{eq:normmalz}) for any given $n,\zeta$. \\
3) Lemma 15 Chap. 3 of $\S 3$ in \cite{8} states that for a polynomial $P$ of degree $D$ and length $L$ and                                        
with zero $\alpha$ satisfying $\vert \zeta-\alpha\vert\leq 1$

\[
 \vert P(\zeta)\vert \leq \vert \zeta- \alpha\vert \cdot LD(1+\vert \zeta\vert)^{D-1}
\]

\noindent holds, leading to the well known result                                                                 

\[
 w_{n}^{\ast}(\zeta)\leq w_{n}(\zeta)=w_{n,1}(\zeta).
\]

\noindent So $w_{n}^{\ast}(\zeta)$ can be bounded above in terms of the approximation constants $w_{n,j}(\zeta)$ which
reverses the direction of the estimates in Theorem \ref{thm1}.

\subsection{Estimates for $w_{n}^{\ast}(\zeta)$ depending on $n$ only} 

A famous result of Wirsing \cite{10}                                                                                                                                    
states 

\begin{equation}  \label{eq:mily}
 w_{n}^{\ast}(\zeta) \geq \frac{w_{n}(\zeta)+1}{2}.
\end{equation}

\noindent We combine the results of Theorem \ref{thm1}, \ref{thm2} with (\ref{eq:mily}) to give lower bounds for $w_{n}^{\ast}(\zeta)$ uniformly in $\zeta$
not algebraic of degree $\leq n$. In order to do this, we use the funcions $\psi_{n,j}^{\ast}$ which have the useful property

\begin{equation} \label{eq:myli}
  \left\vert \sum_{j=1}^{n+1} \psi_{n,j}^{\ast}(Q)\right\vert \leq \frac{C(n)}{\log(Q)},
\end{equation}

\noindent which follows from (\ref{eq:zokz}) and (\ref{eq:mueli}), and $\psi_{n,j}^{\ast}$ relate to the constants $w_{n,j}(\zeta)$.
To get a slightly better result we will also use the dual version of (\ref{eq:stoiko}): indeed from  (\ref{eq:stoiko}) and (\ref{eq:ka2}) it follows that
for $\zeta$ not algebraic of degree $\leq n$ we have

\begin{equation}  \label{eq:stiko}
 \underline{\psi}_{n,j+1}^{\ast}\leq \overline{\psi}_{n,j}^{\ast}, \qquad 1\leq j\leq n.
\end{equation}

\noindent The following preliminary Proposition is a very easy consequence of (\ref{eq:mueli}) and (\ref{eq:stiko}).

\begin{prop} \label{mylli}
 Let $n\geq 2$ be an integer. Then for $\zeta$ not algebraic of degree $\leq n$ the relation 

\[
 \underline{\psi}_{n,1}^{\ast}\leq -\frac{2}{n-1} \underline{\psi}_{n,n+1}^{\ast}
\]

\noindent holds for the approximation constants $\psi_{n,1}^{\ast}, \psi_{n,n+1}^{\ast}$ associated to $(\zeta,\zeta^{2},\ldots,\zeta^{n})$. 
\end{prop}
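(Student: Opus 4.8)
The plan is to combine two elementary facts about the quantities $\psi_{n,j}^{\ast}$: the near-vanishing of their sum, equation~(\ref{eq:myli}) (equivalently~(\ref{eq:zokz}) via~(\ref{eq:ka})), and the chain of inequalities coming from~(\ref{eq:stiko}) together with the trivial monotonicity $\underline{\psi}_{n,1}^{\ast}\leq\underline{\psi}_{n,2}^{\ast}\leq\cdots\leq\underline{\psi}_{n,n+1}^{\ast}$ and the analogous chain for the $\overline{\psi}_{n,j}^{\ast}$. First I would record that~(\ref{eq:myli}) passes to the limit: since $\psi_{n,j}^{\ast}(Q)\leq\overline{\psi}_{n,j}^{\ast}+\epsilon$ for $Q$ large, summing gives $\sum_{j=1}^{n+1}\psi_{n,j}^{\ast}(Q)\leq\sum_{j=1}^{n+1}\overline{\psi}_{n,j}^{\ast}+(n+1)\epsilon$, and choosing $Q\to\infty$ along a sequence where the left side tends to $0$ (which exists because the sum is $O(1/\log Q)$) yields $\sum_{j=1}^{n+1}\overline{\psi}_{n,j}^{\ast}\geq 0$; the dual bound $\sum_{j=1}^{n+1}\underline{\psi}_{n,j}^{\ast}\leq 0$ is obtained symmetrically. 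Only the second of these is needed here.

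Next I would exploit~(\ref{eq:stiko}), which says $\overline{\psi}_{n,j}^{\ast}\geq\underline{\psi}_{n,j+1}^{\ast}$ for $1\leq j\leq n$. Combined with monotonicity of the $\overline{\psi}_{n,j}^{\ast}$, I get, for each such $j$,
\[
\overline{\psi}_{n,1}^{\ast}\leq\overline{\psi}_{n,2}^{\ast}\leq\cdots\leq\overline{\psi}_{n,j}^{\ast},
\qquad
\overline{\psi}_{n,j}^{\ast}\geq\underline{\psi}_{n,j+1}^{\ast}\geq\underline{\psi}_{n,2}^{\ast}\geq\underline{\psi}_{n,1}^{\ast}\cdot(-1)\cdots
\]
— more to the point, I would aim to bound $\sum_{j=1}^{n+1}\underline{\psi}_{n,j}^{\ast}$ from below in terms of just $\underline{\psi}_{n,1}^{\ast}$ and $\underline{\psi}_{n,n+1}^{\ast}$. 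The cleanest route: for $2\leq j\leq n$ write $\underline{\psi}_{n,j}^{\ast}\geq\underline{\psi}_{n,2}^{\ast}$ and then use $\underline{\psi}_{n,2}^{\ast}\leq\overline{\psi}_{n,1}^{\ast}$ from~(\ref{eq:stiko}); this is the wrong direction. Instead I expect the right bookkeeping is: discard nothing, and observe that $\underline{\psi}_{n,j}^{\ast}\geq -\underline{\psi}_{n,n+1}^{\ast}\cdot(\text{something})$ is false in general, so one must be more careful. The correct combination is to use $0\geq\sum_{j=1}^{n+1}\underline{\psi}_{n,j}^{\ast}\geq\underline{\psi}_{n,1}^{\ast}+(n-1)\min_{2\leq j\leq n}\underline{\psi}_{n,j}^{\ast}+\underline{\psi}_{n,n+1}^{\ast}$ and then bound $\min_{2\leq j\leq n}\underline{\psi}_{n,j}^{\ast}=\underline{\psi}_{n,2}^{\ast}\geq -\underline{\psi}_{n,n+1}^{\ast}$, the last step being where~(\ref{eq:stiko}) and~(\ref{eq:ka2})-type duality enter: $\underline{\psi}_{n,2}^{\ast}\geq\underline{\psi}_{n,1}^{\ast}=-\overline{\psi}_{n,n+1}\geq\cdots$. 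Rearranging $0\geq\underline{\psi}_{n,1}^{\ast}+(n-1)\underline{\psi}_{n,2}^{\ast}+\underline{\psi}_{n,n+1}^{\ast}$ with $\underline{\psi}_{n,2}^{\ast}\geq -\underline{\psi}_{n,n+1}^{\ast}$ gives $\underline{\psi}_{n,1}^{\ast}\leq(n-2)\underline{\psi}_{n,n+1}^{\ast}$, which is weaker than claimed, so the sharp argument must instead pair up the indices $j$ and $n+2-j$ and apply~(\ref{eq:stiko}) to each pair.

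Concretely, I would pair the sum $\sum_{j=2}^{n+1}\underline{\psi}_{n,j}^{\ast}\leq -\underline{\psi}_{n,1}^{\ast}$ and show each of the $n$ terms on the left is at least $\underline{\psi}_{n,n+1}^{\ast}$ only for $j=n+1$, while for the remaining $n-1$ terms one uses that $\underline{\psi}_{n,j}^{\ast}\geq -\overline{\psi}_{n,n+2-j}^{\ast}$ (the Mahler duality~(\ref{eq:ka2})) and $\overline{\psi}_{n,n+2-j}^{\ast}\leq\overline{\psi}_{n,n+1}^{\ast}$, together with the limiting form of~(\ref{eq:myli}) applied to the $\overline{\psi}$'s, $\sum\overline{\psi}_{n,i}^{\ast}\geq 0$, which forces $\overline{\psi}_{n,n+1}^{\ast}\leq -\sum_{i\le n}\overline{\psi}_{n,i}^{\ast}$; chaining these and using once more the crude bound $\underline{\psi}_{n,n+1}^{\ast}\le\overline\psi_{n,n+1}^\ast$ I would arrive at $\underline{\psi}_{n,1}^{\ast}+(n-1)(-\underline{\psi}_{n,n+1}^{\ast})+\underline{\psi}_{n,n+1}^{\ast}\le 0$, i.e.\ the asserted $\underline{\psi}_{n,1}^{\ast}\leq -\tfrac{2}{n-1}\underline{\psi}_{n,n+1}^{\ast}$ after noting $\underline{\psi}_{n,n+1}^\ast\le 0$ may be assumed (otherwise the right side is negative and a separate, easy check applies). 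The main obstacle I anticipate is precisely this last bit of bookkeeping — getting the coefficient $\tfrac{2}{n-1}$ rather than something like $n-2$ — which requires symmetrically invoking~(\ref{eq:stiko}) for the pair $(1,2)$ and its dual partner and being careful that the ``$+\epsilon$'' and ``$O(1/\log Q)$'' error terms all vanish in the limit; I would handle them by fixing $\epsilon>0$, running the whole inequality chain with the $\epsilon$-perturbed quantities, and letting $\epsilon\to 0$ at the very end.
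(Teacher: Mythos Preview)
Your proposal does not reach a valid proof; there are two concrete errors and one structural issue.

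First, the algebra at the end is wrong: from $\underline{\psi}_{n,1}^{\ast}+(n-1)(-\underline{\psi}_{n,n+1}^{\ast})+\underline{\psi}_{n,n+1}^{\ast}\le 0$ one obtains $\underline{\psi}_{n,1}^{\ast}\le (n-2)\,\underline{\psi}_{n,n+1}^{\ast}$, not $\underline{\psi}_{n,1}^{\ast}\le -\tfrac{2}{n-1}\underline{\psi}_{n,n+1}^{\ast}$. Since $\underline{\psi}_{n,n+1}^{\ast}\ge 0$ always (it is the liminf of the largest of $n+1$ numbers whose sum is $o(1)$), the bound you derive is strictly weaker than the proposition, and your parenthetical ``$\underline{\psi}_{n,n+1}^{\ast}\le 0$ may be assumed'' has the sign the wrong way round.

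Second, your appeal to ``Mahler duality~(\ref{eq:ka2})'' for an inequality of the shape $\underline{\psi}_{n,j}^{\ast}\ge -\overline{\psi}_{n,n+2-j}^{\ast}$ misreads that identity: (\ref{eq:ka2}) relates the \emph{unstarred} $\underline{\psi}_{n,j}$ to the starred $\overline{\psi}_{n,n+2-j}^{\ast}$, not starred to starred. There is no such self-duality among the $\psi_{n,j}^{\ast}$.

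The structural problem underlying both attempts is that you pass to the limit quantities $\underline{\psi}^{\ast},\overline{\psi}^{\ast}$ too early. The inequality $\sum_j\underline{\psi}_{n,j}^{\ast}\le 0$ is much weaker than the pointwise identity $\sum_j\psi_{n,j}^{\ast}(Q)=o(1)$ from~(\ref{eq:myli}), and you need the latter. The paper's argument runs as follows: using~(\ref{eq:stiko}) with $j=n$, choose a sequence $Q\to\infty$ along which $\psi_{n,n}^{\ast}(Q)\ge\underline{\psi}_{n,n+1}^{\ast}-\epsilon$; at those \emph{same} values of $Q$ one also has $\psi_{n,n+1}^{\ast}(Q)\ge\underline{\psi}_{n,n+1}^{\ast}-\epsilon$ trivially and $\sum_{j=1}^{n+1}\psi_{n,j}^{\ast}(Q)=o(1)$. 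Subtracting the two top terms from the sum and using monotonicity gives
\[
(n-1)\,\psi_{n,1}^{\ast}(Q)\ \le\ \sum_{j=1}^{n-1}\psi_{n,j}^{\ast}(Q)\ \le\ -2\,\underline{\psi}_{n,n+1}^{\ast}+2\epsilon+o(1),
\]
and only now does one take $\liminf$ over this sequence to conclude. The essential point you are missing is that the Minkowski constraint~(\ref{eq:myli}) and the special choice of $Q$ coming from~(\ref{eq:stiko}) must be exploited at the same $Q$, before any limits are taken.
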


\begin{proof}
 By definition for any $\epsilon>0$ and sufficiently large $Q$ we have $\psi_{n,n+1}^{\ast}(Q)\geq \underline{\psi}_{n,n+1}^{\ast}-\epsilon$.
 By (\ref{eq:stiko}) we also have $\underline{\psi}_{n,n+1}^{\ast}\leq \overline{\psi}_{n,n}^{\ast}$, so that
there exist arbitrarily large values of $Q$ such that $\psi_{n,n}^{\ast}(Q)\geq \psi_{n,n+1}^{\ast}(Q)-\epsilon$.
Combining these observations yields arbitrarily large values $Q$, 
such that $\psi_{n,n}^{\ast}(Q)+\psi_{n,n+1}^{\ast}(Q)\geq 2\underline{\psi}_{n,n+1}^{\ast}(Q)-2\epsilon$.\\
On the other hand, for any $Q$ we have $(n-1)\psi_{n,1}^{\ast}(Q)\leq \sum_{j=1}^{n-1} \psi_{n,j}^{\ast}(Q)$, in particular 
for those values $Q$ with the property $\psi_{n,n}^{\ast}(Q)+\psi_{n,n+1}^{\ast}(Q)\geq 2\underline{\psi}_{n,n+1}^{\ast}(Q)-2\epsilon$.
The assertion of the Proposition follows with (\ref{eq:myli}) und $\epsilon\to 0$.
\end{proof}

\noindent An application of Proposition \ref{mylli} together with Wirsing's result yields 

\begin{coro}  \label{corall}
 Let $n\geq 2$ be an integer. Then for $\zeta$ not algebraic of degree $\leq n$ the 
approximation constant $w_{n}^{\ast}(\zeta)$ of $(\zeta,\zeta^{2},\ldots,\zeta^{n})$ is bounded below as follows

\begin{equation} \label{eq:holnmm}
 w_{n}^{\ast}(\zeta)\geq \frac{1}{4}\left(n+1+\sqrt{n^{2}+10n-7}\right)=:\mathscr{U}(n). 
\end{equation}

\noindent For $n\to\infty$ we have the asymptotical behaviour $\mathscr{U}(n)=\frac{n}{2}+\frac{3}{2}+o(1)$.
\end{coro}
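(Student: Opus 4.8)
The plan is to combine the chain of inequalities already assembled in the paper with Wirsing's bound (\ref{eq:mily}) and translate everything into the language of the functions $\psi_{n,j}^\ast$. First I would express $w_n(\zeta)$ in terms of $\underline{\psi}_{n,1}^\ast$ via (\ref{eq:ka3}) with $j=1$, namely $w_n(\zeta)+1 = \frac{n+1}{n}\big(\frac{1}{n}+\underline{\psi}_{n,1}^\ast\big)^{-1}$, so that Wirsing's inequality becomes a lower bound for $w_n^\ast(\zeta)$ purely in terms of $\underline{\psi}_{n,1}^\ast$. Dually, Theorem \ref{thm2} (equivalently Theorem \ref{thm1} via (\ref{eq:kb}), (\ref{eq:ka3}) with $j=n+1$) gives $w_n^\ast(\zeta) \geq w_{n,n+1}(\zeta) = \frac{n+1}{n}\big(\frac{1}{n}+\underline{\psi}_{n,n+1}^\ast\big)^{-1} - 1$, a second lower bound, this time in terms of $\underline{\psi}_{n,n+1}^\ast$. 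The two bounds pull in opposite directions as the two quantities $\underline{\psi}_{n,1}^\ast \le 0 \le \underline{\psi}_{n,n+1}^\ast$ vary (note $\underline{\psi}_{n,1}^\ast\le0$ since $\sum\psi_{n,j}^\ast(Q)\to0$ by (\ref{eq:myli}) while the $\psi_{n,j}^\ast(Q)$ are nondecreasing in $j$), so the worst case is a saddle point, and Proposition \ref{mylli} provides exactly the constraint $\underline{\psi}_{n,1}^\ast \le -\frac{2}{n-1}\underline{\psi}_{n,n+1}^\ast$ that links them.

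Concretely, I would set $t := \underline{\psi}_{n,n+1}^\ast \ge 0$ and $s := -\underline{\psi}_{n,1}^\ast \ge 0$, so that Proposition \ref{mylli} reads $s \ge \frac{2t}{n-1}$. Wirsing's bound becomes
\[
w_n^\ast(\zeta) + 1 \;\ge\; \frac{1}{2}\left(\frac{n+1}{n}\cdot\frac{1}{\frac{1}{n}-s} + 1\right),
\]
which is increasing in $s$, hence minimized at $s = \frac{2t}{n-1}$; Theorem \ref{thm2} gives
\[
w_n^\ast(\zeta) + 1 \;\ge\; \frac{n+1}{n}\cdot\frac{1}{\frac{1}{n}+t},
\]
which is decreasing in $t$. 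Taking the maximum of these two lower bounds and then minimizing over the admissible range of $t \in [0, \tfrac{1}{n}\cdot\tfrac{n-1}{2}]$ (the upper endpoint coming from $s \le \tfrac1n$), the infimum occurs where the two expressions are equal. Solving that equation for $t$ — after clearing denominators it is a quadratic in $t$ — and substituting back yields the stated closed form; the radicand $n^2 + 10n - 7$ will emerge from the discriminant of that quadratic. Finally, the asymptotic claim $\mathscr{U}(n) = \frac{n}{2} + \frac{3}{2} + o(1)$ follows by expanding $\sqrt{n^2 + 10n - 7} = n\sqrt{1 + 10/n - 7/n^2} = n + 5 + o(1)$, so $\mathscr{U}(n) = \frac14(n + 1 + n + 5 + o(1)) = \frac{n}{2} + \frac32 + o(1)$.

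The only mildly delicate points are bookkeeping ones rather than genuine obstacles: one must check the sign conventions (that $\underline{\psi}_{n,1}^\ast \le 0$, ensuring $\frac1n - s > 0$ so the Wirsing expression is well-defined and genuinely increasing in $s$) and verify that the optimizing value of $t$ indeed lies in the admissible interval, so that neither bound is vacuous at the extremes. The crossover computation — equating the two rational-in-$t$ lower bounds and extracting the positive root — is the main step, but it is a routine quadratic; I expect no real difficulty beyond keeping the algebra clean and confirming the discriminant simplifies to $n^2 + 10n - 7$.
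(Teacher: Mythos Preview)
Your strategy is essentially the same as the paper's: combine Wirsing's bound (\ref{eq:mily}) (translated via (\ref{eq:ka3}) into a bound involving $\underline{\psi}_{n,1}^\ast$), Theorem \ref{thm1} (giving $w_n^\ast(\zeta)\ge w_{n,n+1}(\zeta)$, equivalently a bound in $\underline{\psi}_{n,n+1}^\ast$), and the linking constraint from Proposition \ref{mylli}, then optimize. The paper parametrizes by $w_{n,n+1}(\zeta)$ rather than by $t=\underline{\psi}_{n,n+1}^\ast$, but this is the same computation up to the bijection (\ref{eq:ka3}). One small slip: your displayed Wirsing inequality should read $w_n^\ast(\zeta)+1\ge\frac12\bigl(\frac{n+1}{n}(\frac1n-s)^{-1}+2\bigr)$, not $+1$; with the correct constant the quadratic indeed produces the discriminant $n^2+10n-7$.
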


\begin{proof}
From (\ref{eq:ka3}) we have

\begin{equation}  \label{eq:ellenho}
 \underline{\psi}_{n,n+1}^{\ast}= \frac{n-w_{n,n+1}(\zeta)}{n(w_{n,n+1}(\zeta)+1)}.
\end{equation}

\noindent Dividing the right hand side of (\ref{eq:ellenho}) by $(n-1)$ and combining it with 
the estimate from Proposition \ref{mylli} gives an upper bound for
$\underline{\psi}_{n,1}^{\ast}$ in terms of $w_{n,n+1}(\zeta)$. Using this expression in (\ref{eq:ka3})
and applying Wirsing's result (\ref{eq:mily}) leads to 

\[
 w_{n}^{\ast}(\zeta)\geq \frac{w_{n}(\zeta)+1}{2}\geq \frac{n+1}{2}\cdot \frac{1}{1-\frac{2}{n-1}\frac{n-w_{n,n+1}(\zeta)}{w_{n,n+1}(\zeta)+1}}.
\]

\noindent On the other hand we have the lower bound $w_{n,n+1}(\zeta)$ for $w_{n}^{\ast}(\zeta)$ by Theorem \ref{thm1}, so

\[
 w_{n}^{\ast}(\zeta)\geq \max\left\{ \frac{n+1}{2}\cdot \frac{1}{1-\frac{2}{n-1}\frac{n-w_{n,n+1}(\zeta)}{w_{n,n+1}(\zeta)+1}}, w_{n,n+1}(\zeta)\right\}.
\]

\noindent It's not hard to see that the left hand term in the maximum decreases as $w_{n,n+1}(\zeta)$ increases, so the minimum is attained
at the value $w_{n,n+1}(\zeta)>0$ where both expressions in the maximum coincide. This leads to a quadratic equation and after basic simplifications
finally yields $\mathscr{U}(n)$ as the minimum lower bound for $w_{n}^{\ast}(\zeta)$. Checking the asymptotics for $\mathscr{U}(n)$ is a standard calculation.
\end{proof}

\noindent Remarks: 1) Lemma 1 on page 46 in \cite{5}                                                                                        
 states that in the present case of simultaneous approximation of $(\zeta,\zeta^{2},\ldots,\zeta^{n})$ of $\zeta$
not algebraic of degree $\leq \left\lceil \frac{n}{2}\right\rceil$
the approximation constant $\widehat{w}_{n}^{\prime}(\zeta)$ is bounded above by $\left \lceil \frac{n}{2}\right\rceil^{-1}$.
Applying this to (\ref{eq:kb}) and Theorem \ref{thm1} we immediately derive 

\[
 w_{n}^{\ast}(\zeta)\geq \left \lceil \frac{n}{2}\right\rceil
\]

\noindent and slight refinements in combination with Wirsing's result can be derived similarly as in Corollary \ref{corall}. 
However, the results of Corollary \ref{corall} are a little stronger.\\

\noindent 2) Note that there exists no nontrivial upper bound for the value $w_{n}^{\prime}(\zeta)$ (as for
 $\widehat{w}_{n}^{\prime}(\zeta)$ in Remark 1) even in the present special case 
of simultaneous approximation of $(\zeta,\zeta^{2},\ldots,\zeta^{n})$.
Indeed, we can have $w_{n}^{\prime}(\zeta)=\infty$ (which is equivalent to $\widehat{w}_{n,n+1}(\zeta)=0$ by (\ref{eq:kb})), 
taking $\zeta=\sum_{l\geq 1} 10^{-l!}$ for example.
So we cannot use Theorems \ref{thm1},\ref{thm2} to give nontrivial bounds for the approximation constant $\widehat{w}_{n}^{\ast}(\zeta)$
 as easily as above.\\
Moreover, no analogue of (\ref{eq:mily}) for $\widehat{w}_{n}^{\ast}(\zeta)$ seems to be known. Bugeaud and Laurent established in Theorem 2.1 in \cite{1}                     
for $\zeta$ not algebraic of degree $\leq n$ the inequality

\[
 \widehat{w}_{n}^{\ast}(\zeta)\geq \frac{w_{n}(\zeta)}{w_{n}(\zeta)-n+1},
\]

\noindent which we can combine with Theorem \ref{thm1} to get

\begin{equation}  \label{eq:langebeine}
 \widehat{w}_{n}^{\ast}(\zeta)\geq \max\left\{\frac{w_{n}(\zeta)}{w_{n}(\zeta)-n+1},\widehat{w}_{n,n+1}(\zeta)\right\}.
\end{equation}

\noindent However, any number $\zeta$ with $w_{n}^{\prime}(\zeta)=\infty$ (for instance again $\zeta=\sum_{l\geq 1} 10^{-l!}$)
automatically yields $w_{n}(\zeta)=\infty$ (which follows easily from the definition of $w_{n}(\zeta),w_{n}^{\prime}(\zeta)$ 
or alternatively from Khinchins transference principle $w_{n}(\zeta)\geq (n-1)w_{n}^{\prime}(\zeta)+n-2$, see \cite{4}).                                     
For such $\zeta$, by (\ref{eq:kb}) we also have $\widehat{w}_{n,n+1}(\zeta)=0$ though, 
so in this case (\ref{eq:langebeine}) only leads to the very weak bound $\widehat{w}_{n}^{\ast}(\zeta)\geq 1$. 

\newpage

\end{document}